\newtheorem{theorem}{Theorem}[section]
\newtheorem{lemma}[theorem]{Lemma}
\newtheorem{claim}[theorem]{Claim}
\numberwithin{equation}{section}
\newcommand{\blankbox}[2]{%
  \parbox{\columnwidth}{\centering
    \setlength{\fboxsep}{0pt}%
    \fbox{\raisebox{0pt}[#2]{\hspace{#1}}}%
  }%
}
\newcommand{\bd}{\noindent {\sc Proof}.\ \ }
\newcommand{\abs}[1]{\lvert#1\rvert}
\newcommand{\R}{\mathbb{R}}
\newcommand{\Z}{\mathbb{Z}}
\newcommand{\N}{\mathbb{N}}
\newcommand{\supp}{\mathop{\mathrm{supp}}\nolimits}
\newcommand{\Leb}{\mathop{\mathit{Leb}}\nolimits}
\begin{document}

\title[Minimal diffeo.'s of the circle with measurable fundamental domains]%
{Minimal $C^1$-diffeomorphisms of the circle which admit
measurable fundamental domains}

\author{Hiroki Kodama}
\address{Graduate School of Mathematical Sciences, The University of Tokyo, 
3-8-1 Komaba, Meguro-ku, Tokyo, 153-8914 Japan
}
\email{kodama@ms.u-tokyo.ac.jp}

\author{Shigenori Matsumoto}
\address{Department of Mathematics, College of
Science and Technology, Nihon University, 1-8-14 Kanda, Surugadai,
Chiyoda-ku, Tokyo, 101-8308 Japan
}
\email{matsumo@math.cst.nihon-u.ac.jp}

\thanks{
The first author is partially supported by the Japan Society for the Promotion of Science (JSPS) through its ``Funding Program for World-Leading Innovative R\&D on Science and Technology (FIRST Program).''
The second author is partially supported by Grant-in-Aid for
Scientific Research (C) No.\ 20540096.
}
\subjclass[2000]{Primary 37E15,
secondary 37C05, 37A40.}
\keywords{diffeomorphism, minimality, rotation number,
ergodicity}

\date{\today}

\maketitle

\begin{abstract}
We construct, for each irrational number $\alpha$, a minimal 
$C^1$-diffeomorphism of the circle with rotation number $\alpha$
which admits a measurable fundamental domain with respect to
the Lebesgue measure.
\end{abstract}

\section{Introduction}

The concept of ergodicity is important not only for measure preserving
dynamical systems but also for systems which admits a
natural quasi-invariant
measure. Given a probability space $(X,\mu)$ and a transformation $T$
of $X$, $\mu$ is said to be {\em quasi-invariant} if the push forward
$T_*\mu$ is equivalent to $\mu$. In this case $T$ is called {\em
ergodic}
with respect to $\mu$, if a $T$-invariant Borel subset in $X$ is either null
or conull.

A diffeomorphism of a differentiable manifold always leaves the
Riemannian volume (also called the Lebesgue measure) quasi-invariant,
and one can ask if a given diffeomorphism is ergodic with respect to
the Lebesgue measure (below {\em ergodic} for short) or not. 
Answering a question of A.~Denjoy \cite{D}, 
M.~Herman (Chapt.~VII, p.~86, \cite{H}) showed that 
a $C^1$-diffeomorphism of the circle with derivative of bounded variation
is ergodic provided 
its rotation number is irrational.
See also Chapt.~12.7, p.~419, \cite{KH}.
Contrarily Oliveira and da Rocha \cite{OR} constructed a minimal $C^1$
diffeomorphism
of the circle which is not ergodic.

At the opposite extreme of the ergodicity lies the concept of measurable 
fundamental domains. 
Given
a transformation $T$ of a standard probability space $(X,\mu)$ leaving
$\mu$ quasi-invariant, a Borel subset $C$ of $X$ is called a
{\em measurable fundamental domain} if $T^nC$ ($n\in\Z$) is
mutually disjoint and the union $\cup_{n\in\Z}T^nC$ is conull.
In this case any Borel function on $C$ can be extended to a
$T$-invariant measurable function on $X$, and an ergodic component of $T$
is just a single orbit. The purpose of this paper is to show the
following
theorem.

\begin{theorem} \label{main}
For any irrational number $\alpha$, there is a minimal 
$C^1$-diffeomorphism of the circle with rotation number $\alpha$
which admits a measurable fundamental domain with respect to
the Lebesgue measure. 
\end{theorem}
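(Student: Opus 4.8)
The plan is to realize $f$ as a topological conjugate of the rigid rotation $R_\alpha\colon x\mapsto x+\alpha$ on $S^1=\R/\Z$. Since we demand an irrational rotation number together with minimality, $f$ has no wandering interval, so by Poincar\'e--Denjoy theory it must be of the form $f=h^{-1}R_\alpha h$ for a circle homeomorphism $h$; conversely, building $f$ in this way makes minimality and the value $\alpha$ of the rotation number automatic. Here $h$ is precisely the distribution function of the unique $f$-invariant probability measure $\mu$, and $h_*\mu=\Leb$. The crucial observation is that a measurable fundamental domain $C$ for $f$ with respect to $\Leb$ corresponds, under $h$, to a measurable fundamental domain $D=h(C)$ for $R_\alpha$ with respect to $\nu:=h_*\Leb$; equivalently, $R_\alpha$ must be \emph{totally dissipative} on $(S^1,\nu)$. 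Because $R_\alpha$ is ergodic and conservative for $\Leb$, total dissipativity forces $\nu\perp\Leb$, which is readily seen to be equivalent to $\mu\perp\Leb$. Thus the problem reduces to producing a conjugacy whose invariant measure is singular with respect to Lebesgue measure and whose dissipative structure is prescribed, while keeping $f$ of class $C^1$. Intuitively, $\Leb(f^nC)\to0$ as $\abs{n}\to\infty$ (Lebesgue mass drifts along orbits), whereas $\mu(f^nC)$ stays constant, a coexistence possible only because $\mu\perp\Leb$.

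First I would fix the combinatorial skeleton coming from the continued fraction convergents $p_k/q_k$ of $\alpha$. The orbit segment $\{i\alpha : 0\le i<q_k\}$ organizes $S^1$ into the standard dynamical partition (the three–distance / Rokhlin–tower structure), whose geometry is governed by $\|q_k\alpha\|$. On this nested family of towers I would distribute the mass of $\nu$ in a self-similar, geometrically decaying fashion, arranged so that a single Cantor-type base set $W$ (a nested intersection of suitably chosen tower columns) is $R_\alpha$-wandering and has $\nu$-conull two-sided orbit, with the masses $\nu(R_\alpha^nW)$ summable and tending to $0$ as $\abs{n}\to\infty$, so that $\sum_{n\in\Z}\nu(R_\alpha^nW)=1$. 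This is the circle analogue of the totally dissipative model given by the shift on $\Z$ with weights proportional to $2^{-\abs{n}}$, whose fundamental domain is a single point; it yields simultaneously the singularity $\nu\perp\Leb$ and an explicit fundamental domain $D=W$, hence $C=h^{-1}(W)$ for $f$.

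The \textbf{main obstacle} is smoothness: showing that $f=h^{-1}R_\alpha h$ is genuinely $C^1$ even though $h$ is a singular homeomorphism. Rather than differentiate $h$, I would construct $f$ directly along the $q_k$-tower combinatorics, prescribing its derivative as a convergent infinite product $f'(x)=\prod_k\bigl(1+\varepsilon_k(x)\bigr)$ of controlled bump factors built scale by scale, and prove that the partial products converge uniformly to a continuous, strictly positive function consistent across the self-similar scales. The delicate point is that the standard Denjoy--Koksma estimates are \emph{unavailable} here: precisely because Herman's theorem quoted above shows that a derivative of bounded variation would force ergodicity and preclude any fundamental domain, our $f'$ must be continuous yet of unbounded variation. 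One therefore needs a bespoke distortion estimate controlling the accumulated quantity $\log (f^{q_k})'$ over a full tower, using that $q_k\alpha$ is extremely close to an integer so that this distortion stays uniformly bounded; this both forces continuity of the limiting derivative and pins down the ratios $\Leb(f^nC)/\Leb(C)$ realizing the decay of masses above. The real tension is that $C^1$-regularity wants each $\varepsilon_k$ uniformly small, while singularity and dissipativity want Lebesgue mass to concentrate and drift; these are reconciled only by spreading the concentration over infinitely many ever-longer scales $q_k$, so that every individual scale contributes negligible distortion while the cumulative effect is singular.

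Finally I would check that the object produced is legitimate: that $f$ is an orientation-preserving $C^1$ circle diffeomorphism with continuous positive derivative, hence—being conjugate to $R_\alpha$—minimal with rotation number $\alpha$; and that $C=h^{-1}(W)$ is a Borel set whose iterates $f^nC$ are pairwise disjoint with $\Leb$-conull union, i.e.\ a measurable fundamental domain. The ergodic component of $f$ is then a single orbit, exactly as anticipated in the introduction.
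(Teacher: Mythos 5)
Your skeleton matches the paper's: build $f=h^{-1}R_\alpha h$ so that minimality and the rotation number come for free, transport the problem to making $R_\alpha$ totally dissipative for $\nu=h_*\Leb$, and realize the fundamental domain as a wandering Cantor set $W$ carrying a weighted measure on its orbit. But both steps that carry the real content of the theorem are defective as written. The first, lesser, gap: the existence of a Cantor set $W$ with $R_\alpha^nW$ pairwise disjoint is simply asserted (``suitably chosen tower columns \dots arranged so that $W$ is wandering''). This is precisely the paper's Claim 2.1, proved there by a number-theoretic dichotomy: every element of $W-W$ is made extremely well approximable, while each $m\alpha$, $m\neq 0$, is badly approximable relative to a function $p$ built from $\alpha$ (Lemmas 4.1--4.2). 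A nested-interval construction could be pushed through (at stage $k$ shrink and reposition the intervals so that disjointness from the first $N_k$ iterates holds, and note this persists under passing to subsets), but you would have to give that argument; as it stands the hardest combinatorial fact is assumed.

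The second gap is fatal: your mechanism for $C^1$ smoothness is internally contradictory. You require both (i) $\Leb(f^nC)\to 0$ with $\sum_{n\in\Z}\Leb(f^nC)=1$ and $\Leb(C)>0$ (the latter is forced, since a countable union of null sets cannot be conull), and (ii) a distortion estimate making $\log(f^{q_k})'$ ``uniformly bounded.'' These cannot coexist: since $\Leb(f^{q_k}C)=\int_C (f^{q_k})'\,d\Leb \geq \Leb(C)\exp\bigl(\inf_C \log (f^{q_k})'\bigr)$, the decay in (i) forces $\inf_C \log(f^{q_k})'\to-\infty$. (Even reading ``distortion'' as oscillation of $\log(f^{q_k})'$ does not help: $\int_{S^1}(f^{q_k})'\,d\Leb=1$ gives a point where $(f^{q_k})'=1$, so a uniform oscillation bound again bounds $\log(f^{q_k})'$ globally.) Bounded distortion is exactly the mechanism behind the Herman/Katok ergodicity theorem you cite, so it must fail in any example admitting a fundamental domain; the construction has to let the Birkhoff sums $\log(f^n)'$ diverge to $-\infty$ on $C$, fast enough that $\sum_n \Leb(f^nC)<\infty$, while keeping $\log f'$ itself continuous. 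That is what the paper engineers: the weights of $\nu$ on $R_\alpha^nW$ are $\exp(\phi^{(n)})$, the Birkhoff sums of one continuous function $\phi=\sum_n\phi_n$, where $\phi_n$ is a bump of height $(3/4)^n$ spread over $2^{n+1}$ disjoint iterates of a neighborhood of $W$, yielding $\phi^{(i)}\leq -\tfrac34(3/2)^n$ on $W$ for $2^n\leq\abs{i}<2^{n+1}$, hence $\sum_i\exp(\phi^{(i)})<\infty$ uniformly on $W$, and then $f'=\exp(\phi)\circ h$ is continuous by construction. Relatedly, you cannot independently ``prescribe'' $f'$ as an infinite product while also demanding $f=h^{-1}R_\alpha h$: the derivative is determined by the measure, $f'=\frac{dR^{-1}_{\alpha*}\nu}{d\nu}\circ h$, and the paper resolves this compatibility by defining the weights through the cocycle of $\phi$ rather than the other way around.
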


The rest of the paper is devoted to the proof of Theorem \ref{main}.

\section{A measurable fundamental domain for a Lipschitz homeomorphism}
\label{Lip}

For the sake of simplicity, we first consider 
the case of $\alpha=(\sqrt 5 - 1)/2$, 
the inverse of golden ratio.
We regard the circle $S^1$ as $\R/\Z$.
Suppose $R$ denotes the rotation by $\alpha$.
Define a Cantor set $C$ in the circle by
$$
C=\bigl\{ \sum_{k = 1}^\infty \frac{\varepsilon_k}{2^{3^k}}  \,\bigm|\,
     \text{ $\varepsilon_k = 0$ or $1$} \bigr\} \pmod{\Z}.
$$
Note that all numbers in $C$ are well approximable by rational numbers.

\begin{claim} \label{nointersection}
$R^nC \cap R^mC = \emptyset$ for any integers $n \neq m$.
\end{claim}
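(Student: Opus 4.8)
The plan is to reduce the claim to a statement about a single relative rotation. Since applying $R^{-n}$ shows that $R^nC\cap R^mC=\emptyset$ is equivalent to $C\cap R^{m-n}C=\emptyset$, it suffices to prove that $C\cap R^jC=\emptyset$ for every nonzero integer $j$. Writing a generic point of $R^jC$ as $c'+j\alpha$ with $c'\in C$, this amounts to showing
$$
c-c'\not\equiv j\alpha\pmod{\Z}\qquad\text{for all }c,c'\in C,\ j\in\Z\setminus\{0\}.
$$
The guiding idea, foreshadowed by the remark that points of $C$ are well approximable, is that every difference $c-c'$ is \emph{extremely} well approximable by rationals, whereas $j\alpha$ is \emph{badly} approximable because $\alpha$ is a quadratic irrational; these two properties are incompatible.

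To make this quantitative I would test against the denominators $q_K:=2^{3^K}$. For $c=\sum_{k\ge1}\varepsilon_k 2^{-3^k}\in C$ one has $q_Kc=\sum_{k\le K}\varepsilon_k 2^{3^K-3^k}+\sum_{k>K}\varepsilon_k 2^{3^K-3^k}$, where the first sum is an integer. Writing $\lVert t\rVert$ for the distance from $t$ to the nearest integer, the tail is dominated by its leading term $k=K+1$, whose exponent is $3^K-3^{K+1}=-2\cdot 3^K$, so that
$$
\lVert q_Kc\rVert\le\sum_{k>K}2^{3^K-3^k}\le 2\cdot 2^{-2\cdot 3^K}=2\,q_K^{-2}.
$$
Applying the triangle inequality to $c-c'$ then gives $\lVert q_K(c-c')\rVert\le 4\,q_K^{-2}$ for every $K$. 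The essential feature is that the lacunary exponents $3^k$ force the approximation error to be of order $q_K^{-2}$, i.e.\ strictly smaller than $q_K^{-1}$.

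On the other side, since $\alpha=(\sqrt5-1)/2$ is a quadratic irrational, there is a constant $c_0>0$ with $\lVert n\alpha\rVert\ge c_0/\abs{n}$ for every nonzero $n\in\Z$ (Liouville's theorem in degree two, or directly the bounded continued fraction $[0;1,1,1,\dots]$). Suppose, for contradiction, that $c-c'\equiv j\alpha\pmod{\Z}$ for some nonzero $j$. Since $q_K$ is an integer, $\lVert q_K(c-c')\rVert=\lVert (jq_K)\alpha\rVert$, and hence for every $K$
$$
\frac{c_0}{\abs{j}\,q_K}\le\lVert (jq_K)\alpha\rVert=\lVert q_K(c-c')\rVert\le\frac{4}{q_K^{2}},
$$
which forces $q_K\le 4\abs{j}/c_0$. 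As $q_K=2^{3^K}\to\infty$ while $j$ is fixed, this fails for large $K$, a contradiction, and the claim follows.

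The main obstacle is not any individual estimate but the matching of two competing rates: the construction must place points so near rationals $p/q$ that the error beats the $q^{-1}$ floor that the badly approximable $\alpha$ imposes on $\lVert q\cdot j\alpha\rVert$. This is precisely why the exponents grow like $3^k$ rather than, say, $2^k$: with $2^k$ the natural denominator $2^{2^K}$ would approximate $c$ only to order $q^{-1}$, which would \emph{not} suffice to beat the lower bound. A secondary point to check is that the constant $c_0$ can be chosen uniformly in $n$ (so that replacing $n$ by $jq_K$ is legitimate), which is guaranteed by the boundedness of the partial quotients of $\alpha$.
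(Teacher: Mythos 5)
Your proof is correct and takes essentially the same approach as the paper's: reduce to a single difference $c-c'\equiv j\alpha \pmod{\Z}$, show that every element of $C+(-C)$ is approximable to order $q^{-2}$ along the denominators $q_K=2^{3^K}$, and derive a contradiction with the bad approximability of the quadratic irrational $\alpha$. Your computation is precisely the quantitative instantiation (with $p(q)=q^2$) of the sketch the paper gives and of the details it defers to its Section 4, where the fixed exponent $2$ is replaced by a function $p$ adapted to $\alpha$ so as to handle arbitrary irrational rotation numbers.
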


\begin{proof} 
Suppose $x \in R^nC \cap R^mC$, then
$x-n\alpha, x-m\alpha \in C$, therefore
$$(-n+m)\alpha \in C+(-C) := \bigl\{ \sum_{k \geq 3} \frac{\varepsilon'_k}{2^{3^k}}  \,\bigm|\,
     \text{ $\varepsilon'_k = 0$ or $\pm 1$} \bigr\} \pmod{\Z}.
$$
$(-n+m)\alpha$ is badly approximable, while $C+(-C)$ consists of 
 well approximable numbers,
which is a contradiction. 
\end{proof}
See Section \ref{Cantorset} for the detail and the general case.

\bigskip

Fix a probability measure $\mu_0$ on $C$ without atom 
such that $\supp(\mu_0)=C$. 
We also choose a sequence $(a_i)_{i\in\Z}$ of positive numbers satisfying
$\sum_{i\in\Z}a_i = 1$. Now we can define a 
probability measure $\mu$ on $S^1$ by
$$ \mu := \sum_{i\in\Z} a_i R^i_*\mu_0.$$
The Radon-Nikodym derivative $\frac{dR^{-1}_* \mu}{d\mu}$
is equal to $\frac{a_{i+1}}{a_i}$ on the set $R^iC$.
Now we assume that $\frac{a_{i+1}}{a_i} \in [\frac{1}{D},D]$
for some $D>1$, then it follows that
$\frac{dR^{-1}_* \mu}{d\mu} \in L^{\infty}(S^1,\mu)$.

We define a homeomorphism $h$ of $S^1$ by
$h(0)=0$ and $h(x) = y$ if and only if $\Leb [0,x] = \mu[0,y]$, where
$\Leb$ denotes the Lebesgue measure on $S^1$; or more briefly, 
$h_* \Leb = \mu$.
Finally define a homeomorphism $F$ of $S^1$ 
by $F:=h^{-1}\circ R \circ h$, then
$$
\frac{dF^{-1}_* \Leb}{d\Leb} = \frac{dR^{-1}_*\mu}{d\mu}\circ h 
\in L^\infty(S^1,\Leb),
$$
i.e.\ the map $F$ is a Lipschitz homeomorphism.
The set $C' = h^{-1} C$ is a measurable fundamental domain of $F$.

\section{Make it $C^1$}

\subsection{What shall we do?}

To prove the Theorem, we are trying to make
 the Radon-Nikodym derivative 
$g=\frac{dR^{-1}_*\mu}{d\mu}$
continuous on $S^1$.

Fix an arbitrary point $x_0 \in C$. For a positive integer $i$,

\begin{equation*}
\begin{split}
a_i &= (a_i/a_{i-1})\cdots(a_2/a_1)(a_1/a_0)a_0 \\
{}    &= g(R^{i-1}x_0) \cdots g(Rx_0)g(x_0)a_0,\\
a_{-i} &= (a_{-i+1}/a_{-i})^{-1}\cdots(a_{-1}/a_{-2})^{-1}(a_0/a_{-1})^{-1}a_0\\
{}       &= g(R^{-i}x_0)^{-1} \cdots g(R^{-2}x_0)^{-1}g(R^{-1}x_0)^{-1}a_0.
\end{split}
\end{equation*}

Set $\phi = \log g$ and 
define a map $\Phi\colon S^1\times\R \to S^1\times\R$ by
$\Phi(x,y) = (Rx,y+\phi(x))$. 
Simple calculation shows that $\Phi^n(x,y)=(R^n x,y+\phi^{(n)}(x))$ where
\begin{equation}\label{phim}
\begin{split}
\phi^{(m)}(x) &= \sum_{i=0}^{m-1} \phi(R^i x)\qquad \text{($m>0$),} \\
\phi^{(-m)}(x) &= -\sum_{i=1}^{m} \phi(R^{-i} x) \qquad \text{($m>0$),}\\
\phi^{(0)}(x) &= 0 .
\end{split}
\end{equation}

Therefore $a_i = \exp(\phi^{(i)}(x_0))a_0$. To satisfy
$\sum_{i\in\Z}a_i = 1$, it suffice to find $\phi$ so that
$\sum_{i\in\Z}\exp(\phi^{(i)}(x_0)) < \infty$.

\subsection{Construction step I}
Now we forget about $a_i$'s and the Cantor set $C$.
As a first step, we are going to construct a function $\phi\in C(S^1)$
satisfying
$ \sum_{i\in\Z} \exp(\phi^{(i)}(x_0)) < \infty$
for a single point $x_0$,
where $\phi^{(i)}$ are defined by (\ref{phim}).

We are going to define continuous functions $\phi_n\in C(S^1)$ ($n\in\N)$
in such a way that $\sum_{i=1}^\infty \lVert \phi_n \rVert_\infty < \infty$.
Then $\phi=\sum_{i=1}^\infty\phi_n$ converges uniformly, 
thus $\phi$ is also continuous.

Fix an integer $n\in\N$. Choose a sufficiently small neighbourhood $J$ 
of $x_0$
so that $R^{-2^n}J,\dots,R^{-1}J,J,RJ,\dots,R^{2^n-1}J$ are disjoint.
Consider a bump function $f$ on $J$ so that
$\supp f \subset J$, $f(x_0) = (3/4)^n$ and $0\leq f(x)<(3/4)^n$ 
on $J\setminus \{x_0\}$. Define $\phi_n \colon S^1 \to \R$ by
$$
\phi_n(x) =
\begin{cases}
-f(R^{-i} x) & x \in R^iJ,\, i=0,1,\dots,2^n-1\\
 f(R^{-i} x) & x \in R^iJ,\, i=-2^n,-2^n+1,\dots,-1\\
 0 & \text{otherwise.}
\end{cases}
$$

\begin{lemma}
$\phi_n^{(i)}(x_0)
\begin{cases}
= -\abs{i} (3/4)^n & \text{for $-2^n \leq i \leq 2^n$}\\
\leq 0 & \text{for any $i\in\Z$.}
\end{cases}
$
\end{lemma}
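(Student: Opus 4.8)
The plan is to track the values of $\phi_n$ along the orbit of $x_0$ and to exploit the disjointness hypothesis on the translates $R^iJ$. First I would record where the orbit meets the bumps. Since $R^{-2^n}J,\dots,R^{2^n-1}J$ are disjoint and $x_0\in J$, one has $R^kJ\cap J=\emptyset$ whenever $0<\abs{k}\le 2^{n+1}-1$, so $R^kx_0\notin J$ for all such $k$; more generally any $k\ne k'$ with $R^kx_0,R^{k'}x_0\in J$ satisfy $\abs{k-k'}\ge 2^{n+1}$. Thus, apart from the central visit at time $0$, the orbit returns to $J$ only at times of absolute value $\ge 2^{n+1}$, and those return times are themselves at least $2^{n+1}$ apart.

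Next I would compute $v_j:=\phi_n(R^jx_0)$. Because the translates are disjoint, for each $j$ there is at most one $i\in\{-2^n,\dots,2^n-1\}$ with $R^jx_0\in R^iJ$, equivalently one visit $k=j-i$ with $R^kx_0\in J$; then $v_j=-f(R^kx_0)$ if $0\le j-k\le 2^n-1$ and $v_j=+f(R^kx_0)$ if $-2^n\le j-k\le-1$, and $v_j=0$ otherwise. In particular the central visit $k=0$, where $f(x_0)=(3/4)^n$, gives $v_j=-(3/4)^n$ for $0\le j\le 2^n-1$ and $v_j=+(3/4)^n$ for $-2^n\le j\le -1$. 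The first assertion is then immediate: for $0\le i\le 2^n$ every term of $\phi_n^{(i)}(x_0)=\sum_{j=0}^{i-1}v_j$ equals $-(3/4)^n$, and for $-2^n\le i\le 0$, writing $i=-m$, every term of $-\sum_{j=1}^{m}v_{-j}$ equals $-(3/4)^n$; both give $-\abs{i}(3/4)^n$, and no other visit intervenes precisely because those occur only at times of absolute value $\ge 2^{n+1}>2^n$.

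For the inequality at general $i$ I would organise $\phi_n^{(i)}(x_0)$ as a sum over visits. Fix $i>0$ (the case $i<0$ is symmetric). Writing $w_k=f(R^kx_0)\ge 0$, the contribution of visit $k$ to $\sum_{j=0}^{i-1}v_j$ is $w_k\bigl(N_k^{+}-N_k^{-}\bigr)$, where $N_k^{+}$ and $N_k^{-}$ count the elements of $\{0,\dots,i-1\}$ lying in $[k-2^n,k-1]$ and $[k,k+2^n-1]$ respectively. For $k=0$ the interval $[k-2^n,k-1]$ is entirely negative, so this contribution is $-w_0\min(i,2^n)\le 0$, saturating at $-2^n(3/4)^n$ for $i\ge 2^n$. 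For a non-central visit $k\ge 2^{n+1}$ both intervals lie in the positive integers, and the contribution, as a function of $i$, rises from $0$ up to $w_k2^n$ and back to $0$, being nonzero only for $i\in\{k-2^n+1,\dots,k+2^n-1\}$. The crucial point is that, by the spacing $\ge 2^{n+1}$ of the return times, these supports are pairwise disjoint, so at each fixed $i$ at most one non-central visit contributes, and its contribution is at most $w_k2^n<2^n(3/4)^n$ since $R^kx_0\ne x_0$ forces $w_k<(3/4)^n$. Hence for $i>2^n$ one gets $\phi_n^{(i)}(x_0)\le -2^n(3/4)^n+w_k2^n<0$, while for $0<i\le 2^n$ no non-central visit is yet active and $\phi_n^{(i)}(x_0)=-i(3/4)^n\le 0$.

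The main obstacle is exactly this last bookkeeping: one must guarantee that the positive contributions of the return visits never accumulate. Everything hinges on the disjointness hypothesis, which simultaneously forces the return times to be at least $2^{n+1}$ apart, so that the individual contributions do not overlap, and bounds each of them by $2^n\max_{x\ne x_0}f<2^n(3/4)^n$, strictly below the saturated central value $-2^n(3/4)^n$. The negative-$i$ case runs identically with the roles of the two intervals interchanged, using $v_{-j}=+(3/4)^n$ for $1\le j\le 2^n$ from the central visit.
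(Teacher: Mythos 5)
Your proof is correct and takes essentially the same route as the paper's: both identify the return times of the orbit to $J$, use the spacing $\geq 2^{n+1}$ forced by the disjointness of $R^{-2^n}J,\dots,R^{2^n-1}J$, and show that each return contributes a ``tent'' of height at most $2^n f(R^{m_k}x_0)\leq 2^n(3/4)^n$, which never overcomes the central deficit of $-2^n\cdot(3/4)^n$. The only difference is presentational: the paper runs an induction over the visits to obtain an exact piecewise-linear formula for $\phi_n^{(i)}(x_0)$, while you decompose the Birkhoff sum into per-visit contributions with pairwise disjoint supports and bound each one.
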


\begin{figure}[htbp]
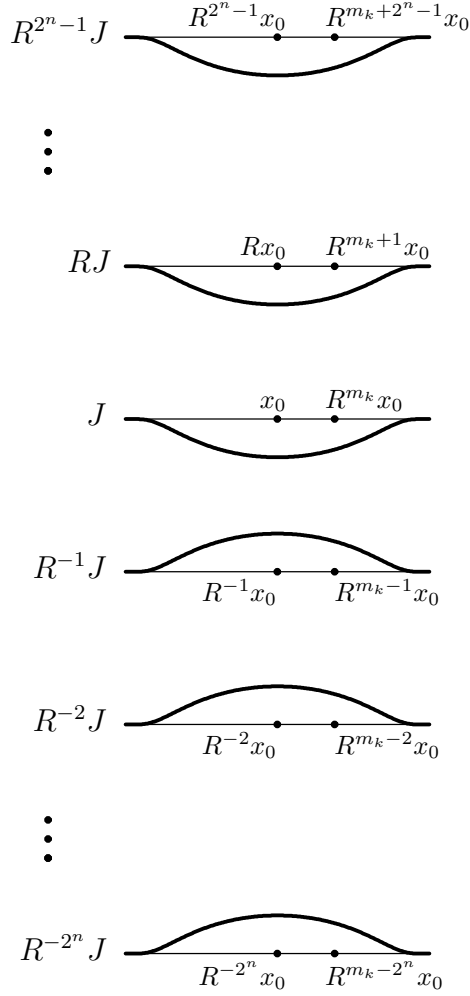

 \begin{center}
\unitlength 0.1in
%
  \vspace{2mm}
  \caption{The graph of $\phi_n$ and the orbit of $x_0$}
 \end{center}
\end{figure}

\begin{proof}
The equality for the first case is trivial.
Define an increasing sequence $(m_k)_{k\in\Z}$ by
$m_0=0$ and $\{m_k | k\in\Z\} = \{m\in\Z | R^m x_0 \in J \}$.
Since $R^{-2^n}J,\dots,R^{2^n-1}J$ are disjoint,
$m_{k+1}-m_k \geq 2^{n+1}$ for any $k\in\Z$.
Using this sequence, $R^m x_0 \in R^i J$ if and only if
$m=m_k+i$ for some $k$. Therefore, 
$$
\phi^{(i+1)}(x_0)
=
\begin{cases}
\phi^{(i)}(x_0) & m_{k-1}+2^n \leq i < m_k-2^n \\
\phi^{(i)}(x_0) + f(R^{m_k} x_0) & m_k-2^n \leq i < m_k  \\
\phi^{(i)}(x_0) - f(R^{m_k} x_0) & m_k \leq i < m_k+2^n \\
\end{cases}
$$
for some $k$. Induction for $\abs{k}$ shows that
$$
\phi^{(i)}(x_0)
=
\begin{cases}
- 2^n \cdot (3/4)^n & m_{k-1}+2^n \leq i \leq m_k-2^n \\
- 2^n \cdot (3/4)^n + (i-(m_k-2^n))f(R^{m_k} x_0) & m_k-2^n \leq i \leq m_k \\
- 2^n \cdot (3/4)^n + ((m_k+2^n)-i)f(R^{m_k} x_0) & m_k \leq i \leq m_k+2^n.\\
\end{cases}
$$
Since $f(R^{m_k} x_0) \leq (3/4)^n$, 
the inequality $\phi^{(i)}(x_0) \leq 0$ also holds.

\end{proof}

Therefore, if $2^n \leq \abs{i} < 2^{n+1}$,
$
\phi^{(i)}(x_0) 
\leq \phi_{n+1}^{(i)}(x_0) 
= -\abs{i} (3/4)^{n+1}
\leq - 2^n \cdot (3/4)^{n+1}
= - 3/4 \cdot (3/2)^n
$.
Finally, 
$\sum_{i\in\Z} \exp(\phi^{(i)}(x_0)) 
\leq 1 + \sum_{n=0}^\infty 2^{n+1}\exp(- 3/4 \cdot (3/2)^n)
=M<\infty$.

\subsection{Construction step II}

We will execute the same argument for the Cantor set $C$
instead of the single point $x_0$.
Since
$R^{-2^n}C,\dots,C,\dots,R^{2^n-1}C$ are disjoint compact sets,
there exists an $\varepsilon$-neighbourhood $N$ of $C$ such that
$R^{-2^n}N,\dots,N,$
$\dots,R^{2^n-1}N$ are disjoint.
Define a bump function $f$ so that
$\supp f \subset N$, $f(x) = (3/4)^n$ on $C$ and $0\leq f(x)<(3/4)^n$ 
on $N\setminus C$. Now we apply the same argument as in the previous subsection,to obtain the function $\phi \in C(S^1)$ such that 
$ \sum_{i\in\Z} \exp(\phi^{(i)}(x)) < M <\infty$ for any $x\in C$.

We define a finite measure $\tilde{\mu}$ on $S^1$ by
$$ \tilde{\mu} := \sum_{i\in\Z} (\exp \circ \phi^{(i)}\circ R^{-i}) R^i_*\mu_0.$$
Normalize $\tilde{\mu}$ to obtain a probability measure $\mu$, namely
$ \mu := \frac{\tilde{\mu}}{\int_{S^1}d\tilde{\mu}}$.

Define $h$ and $F$ as in the section \ref{Lip}, then
$$
\frac{dF^{-1}_* \Leb}{d\Leb} = \frac{dR^{-1}_*\mu}{d\mu}\circ h 
= g\circ h
$$
is a continuous function because $g(x)=\exp(\phi(x))$. 
We proved Theorem \ref{main} for the case $\alpha=(\sqrt 5 - 1)/2$.


\section{Construction of Cantor set for general $\alpha$}\label{Cantorset}

For general irrational number $\alpha$, 
it is enough to find a Cantor set $C$ that satisfies
Claim \ref{nointersection}.
For a real number $x$ and
a function $p \colon \N \to \N$,
define the approximation constant $c_p(x)$ by
$$ c_p(x) := \liminf_{q \to \infty} 
\left( p(q) \cdot \mathop{\rm dist}(x,\tfrac{1}{q}\Z) \right).$$
A real number $x$ is said to be $p$-approximable if $c_p(x)=0$.
Note that $x$ is well approximable if $x$ is $p$-approximable
for $p(q)=q^2$, so this is a generalization of well-approximability.

It is clear that $c_p(x)=0$ if $x$ is rational number.
On the other hand, for any irrational number $x$ we can find 
a function $p$ satisfying $c_p(x)>0$. Moreover, we will show the
following lemma.

\begin{lemma} \label{1}
For a given irrational number $\alpha$, we can find 
a function $p$ such that
$c_p(m \alpha)\geq 1$ for any nonzero integer $m$.
\end{lemma}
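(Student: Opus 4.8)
The plan is to reduce the statement to a form where the dependence on $m$ is transparent, and then to build $p$ more or less by brute force so that it dominates the relevant reciprocal distances. First I would rewrite the distance occurring in the definition of $c_p$: since $\tfrac1q\Z$ is a lattice of spacing $1/q$, one has $\mathop{\rm dist}(m\alpha,\tfrac1q\Z)=\tfrac1q\mathop{\rm dist}(qm\alpha,\Z)$, so that
$$ c_p(m\alpha)=\liminf_{q\to\infty}\frac{p(q)}{q}\,\mathop{\rm dist}(qm\alpha,\Z). $$
Because $\alpha$ is irrational, $\mathop{\rm dist}(qm\alpha,\Z)>0$ for every $q\geq1$ and every nonzero $m$, so none of these quantities degenerates and every reciprocal below is finite.

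The key observation is that for a fixed $m$ the requirement $c_p(m\alpha)\geq1$ constrains only the tail behaviour in $q$, and there is no upper bound imposed on $p$, so I am free to let $p(q)$ be as large as I like. Concretely, I would set
$$ p(q):=\left\lceil\, q\cdot\max_{1\le\abs{m}\le q}\frac{1}{\mathop{\rm dist}(qm\alpha,\Z)}\,\right\rceil, $$
which is a well-defined positive integer since the maximum runs over a finite nonempty set of strictly positive terms. Any cutoff $M(q)$ with $M(q)\to\infty$ would serve equally well in place of $q$; the only feature I need is that each fixed index is eventually captured by the range of the maximum.

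Then I would verify the bound by fixing $m\neq0$ and restricting to $q\geq\abs{m}$. For such $q$ the index $m$ (and $-m$, which yields the same distance since distance to $\Z$ is even) lies in the range of the maximum, hence $p(q)\geq q/\mathop{\rm dist}(qm\alpha,\Z)$ and therefore $\tfrac{p(q)}{q}\mathop{\rm dist}(qm\alpha,\Z)\geq1$. As this holds for all sufficiently large $q$, the liminf is at least $1$, that is, $c_p(m\alpha)\geq1$, as required.

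The only real subtlety is the order of quantifiers: a single $p$ must serve all $m$ at once, whereas for each fixed $q$ the reciprocal distance $\mathop{\rm dist}(qm\alpha,\Z)^{-1}$ is unbounded as $m$ ranges over all integers, because $q\alpha$ is itself irrational. The construction sidesteps this precisely because the liminf discards any finite initial segment of $q$'s: each $m$ need only be handled for $q\geq\abs{m}$, and over that range it is already among the terms defining $p(q)$. I expect this interchange to be the main thing to get right, rather than any quantitative estimate.
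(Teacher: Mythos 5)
Your proof is correct and takes essentially the same approach as the paper: both define $p(q)$ by maximizing, over the finitely many indices $n$ with $1 \le n \le q$, a quantity large enough to force $p(q)\cdot\mathop{\rm dist}(n\alpha,\tfrac{1}{q}\Z)\ge 1$, and then observe that each fixed $m$ is captured by this maximum for all $q\ge |m|$, so the liminf is at least $1$. The only difference is cosmetic — you write an explicit formula (a ceiling of $q$ times reciprocal distances) where the paper picks abstract integers $p_n(q)$ satisfying the same inequality.
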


\begin{proof}
Since $c_p(-mx)=c_p(mx)$, 
it is enough to show the lemma for the case  $m \in \N$.
Let us start for any natural numbers $n$ and $q$ 
by taking a natural number $p_n(q)$ so that 
$p_n(q) \cdot \mathop{\rm dist}(n\alpha,\tfrac{1}{q}\Z) \geq 1$.
Then define a function $p$ by 
$$p(q) = \max_{1 \leq n \leq q} p_n(q).$$
By this construction $p(q) \geq p_m(q)$ for any $q\geq m$, therefore
\begin{equation*}
\begin{split}
c_p(m\alpha) 
&= \liminf_{q \to \infty} \left( p(q) \cdot \mathop{\rm dist}(m\alpha,\tfrac{1}{q}\Z) \right) \\
&\geq \liminf_{q \to \infty} \left( p_m(q) \cdot \mathop{\rm dist}(m\alpha,\tfrac{1}{q}\Z) \right) \\
&\geq 1.
\end{split}
\end{equation*}
\end{proof}
For this 
function $p$, 
we inductively take an increasing sequence $q_0,q_1,\dots$ of natural numbers 
satisfying the following conditions;
$q_0=1$, $q_n | q_{n+1}$, $q_n/q_{n+1} \leq 1/3 $ and $p(q_n) / q_{n+1} \leq 2^{-n}$.
Define a Cantor set $C$ by
$$
C := \bigl\{ \sum_{n=1}^{\infty}  \frac{\varepsilon_n}{q_n} \bigm| 
\text{$\varepsilon_n = 0$ or $1$} \bigr\}.
$$
This Cantor set $C$ consists of $p$-approximable numbers.
We can also show the following lemma.

\begin{lemma} \label{2}
For any $\beta \in C-C$, the approximation constant $c_p(\beta)$ is 
equal to $0$, where
$$
C-C = \bigl\{ \sum_{n=1}^{\infty}  \frac{\varepsilon'_n}{q_n} \bigm| 
\text{$\varepsilon'_n = 0$ or $\pm1$} \bigr\}.
$$
\end{lemma}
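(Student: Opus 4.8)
The plan is to bound from above the $\liminf$ defining $c_p(\beta)$ by evaluating the quantity $p(q)\cdot\mathop{\rm dist}(\beta,\tfrac1q\Z)$ only along the subsequence $q=q_N$ and showing it tends to $0$; since a liminf of nonnegative numbers is dominated by any subsequential limit, this forces $c_p(\beta)=0$. The three defining conditions imposed on $(q_n)$ are exactly what make this subsequence work.

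Write $\beta=\sum_{n=1}^\infty \varepsilon'_n/q_n$ with $\varepsilon'_n\in\{0,\pm1\}$ and split the series at level $N$ into a head $\sum_{n=1}^N\varepsilon'_n/q_n$ and a tail $\sum_{n=N+1}^\infty\varepsilon'_n/q_n$. First I would observe that the head lies in $\tfrac1{q_N}\Z$: the divisibility chain $q_n\mid q_{n+1}$ gives $q_n\mid q_N$ for every $n\le N$, so each $\varepsilon'_n/q_n=\varepsilon'_n(q_N/q_n)/q_N$ has denominator dividing $q_N$. Consequently $\mathop{\rm dist}(\beta,\tfrac1{q_N}\Z)$ is at most the absolute value of the tail.

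Next I would bound the tail geometrically. The condition $q_n/q_{n+1}\le 1/3$ yields $q_{N+1+k}\ge 3^k q_{N+1}$, so
$$
\Bigl|\sum_{n=N+1}^\infty\frac{\varepsilon'_n}{q_n}\Bigr|
\le \sum_{n=N+1}^\infty\frac1{q_n}
\le \frac1{q_{N+1}}\sum_{k=0}^\infty 3^{-k}
=\frac{3}{2q_{N+1}}.
$$
Multiplying by $p(q_N)$ and invoking the last condition $p(q_n)/q_{n+1}\le 2^{-n}$ gives
$$
p(q_N)\cdot\mathop{\rm dist}\bigl(\beta,\tfrac1{q_N}\Z\bigr)
\le \frac{3}{2}\cdot\frac{p(q_N)}{q_{N+1}}
\le \frac{3}{2}\cdot 2^{-N},
$$
which tends to $0$ as $N\to\infty$. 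Since the $q_N$ are strictly increasing, $q_N\to\infty$, so the liminf over all $q$ is bounded above by this subsequential limit; being a liminf of nonnegative quantities, it equals $0$, i.e.\ $c_p(\beta)=0$.

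There is no serious obstacle here: the lemma amounts to checking that the three arithmetic conditions on $(q_n)$ cooperate. Divisibility makes the head exactly representable with denominator $q_N$; the ratio bound $1/3$ forces the tail to decay geometrically; and the bound $p(q_n)/q_{n+1}\le 2^{-n}$ guarantees that $p(q_N)$ cannot overwhelm the smallness of the tail. The only point needing a little care is that one is entitled to pass to the subsequence $q=q_N$ precisely because the goal is to bound a liminf from above rather than to compute a genuine limit.
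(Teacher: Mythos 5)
Your proposal is correct and follows essentially the same argument as the paper's proof: evaluate $p(q)\cdot\mathop{\rm dist}(\beta,\tfrac1q\Z)$ along the subsequence $q=q_N$, use divisibility to reduce to the tail, bound the tail geometrically via $q_n/q_{n+1}\le 1/3$, and conclude with $p(q_N)/q_{N+1}\le 2^{-N}$ and the fact that a liminf is dominated by its value along any subsequence. The only difference is cosmetic: you make explicit the observation that the head $\sum_{n\le N}\varepsilon'_n/q_n$ lies in $\tfrac1{q_N}\Z$, which the paper uses implicitly in its first inequality.
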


\begin{proof}
\begin{equation*}
\begin{split}
&p(q_i) \cdot \mathop{\rm dist}(\beta,\tfrac{1}{q_i}\Z)
\leq p(q_i) \left\lvert \sum_{n=i+1}^{\infty}  \frac{\varepsilon'_n}{q_n} \right\rvert 
\\ & \qquad
\leq p(q_i) \sum_{n=i+1}^{\infty} \frac{1}{q_n}
= \frac{p(q_i)}{q_{i+1}} \sum_{n=i+1}^{\infty} \frac{q_{i+1}}{q_n}
\leq \frac{1}{2^i} \sum_{k=0}^{\infty} \left( \frac{1}{3} \right)^k
= \frac{3}{2^{i+1}}.
\end{split}
\end{equation*}
Thus
\begin{equation*}
\begin{split}
c_p(\beta)
&= \liminf_{q \to \infty} \left( p(q) \cdot \mathop{\rm dist}(\beta,\tfrac{1}{q}\Z) \right)\\
&\leq \liminf_{i \to \infty} \left( p(q_i) \cdot \mathop{\rm dist}(\beta,\tfrac{1}{q_i}\Z) \right)\\
&\leq \liminf_{i \to \infty} \, \frac{3}{2^{i+1}}\\
&=0.
\end{split}
\end{equation*}
Therefore $c_p(\beta)=0$.
\end{proof}

Claim \ref{nointersection} follows from Lemma \ref{1} and Lemma \ref{2}, 
so we proved Theorem \ref{main} for the general case.


%
%


\begin{thebibliography}{KH}

\bibitem[D]{D}
A.~Denjoy, 
{\em Sur les courbes d\'efini par les \'equations 
diff\'erentielle \`a la surfase du tore.}
J.~Math.\ Pures Appl.\ 9(11) (1932), 333-375.

\bibitem[H]{H} 
M.~R.~Herman, 
{\em Sur la conjugaison diff\'erentiable des diff\'eomorphismes du cercle a des rotations.}
Publ.\ Math.\ I.~H.~E.~S., 49(1979), 5-242.

\bibitem[KH]{KH} 
A.~Katok and B.~Hasselblatt, 
{\em Introduction to the Modern Theory of Dynamical Systems.}
Encyclopedia of Mathematics and its Applications, 
Vol.\ 54, Cambridge University Press, 1995.

\bibitem[OR]{OR}
F.~Oliveira and L.\ F.\ C.\ da Rocha, 
{\em Minimal non-ergodic $C^1$-diffeomorphisms of the circle.}
Erg.\ Th.\ Dyn.\ Sys.\ 21(2001), 1843-1854.

\end{thebibliography}
\end{document}